\newcommand{\n}{\noindent}
\newcommand{\vp}{\varepsilon}
\newcommand{\bb}[1]{\mathbb{#1}}
\newcommand{\cl}[1]{\mathcal{#1}}
\newcommand{\ovl}{\overline}
\theoremstyle{plain}
\newtheorem{thm}{Theorem}[section]
\newtheorem{lem}[thm]{Lemma}
\newtheorem{pro}[thm]{Proposition}
\newtheorem{cor}[thm]{Corollary}
\theoremstyle{definition}
\theoremstyle{remark}
\newtheorem{rem}[thm]{Remark}
\numberwithin{equation}{section}
\def\tilde{\widetilde}
\renewcommand{\tilde}{\widetilde}
\def\C{\cl C}
\def\CC{\bb C}
\def\F{\bb F}
\def\Z{\bb Z}
\def\d{\delta}
\def\CC{\bb C}
\def\CC{\bb C}
\def\F{\bb F}
\def\d{\delta}
\def\CC{\bb C}
\def\phi{\varphi}
\def\n{\noindent}
\def\a{\alpha}
\def\C{\mathscr{C}}
\def\I{\cl  I}
  \def\a{\alpha}
\begin{document}

 \pagenumbering{roman}

 \thispagestyle{empty}



\setcounter{page}{1}

\newtheorem{ass}{}

 \pagenumbering{arabic}
 \def\ovl{\overline}
 \def\U{\cl U}

      \def\l{\ell}
      
       \title{A note on strong similarity and the Connes embedding problem}

\author{by\\
Gilles  Pisier\footnote{ORCID    0000-0002-3091-2049}   \\
Sorbonne Universit\'e\\
and\\ Texas  A\&M  University}

 \maketitle
      
      \begin{abstract} We show that there exists a completely bounded (c.b. in short) unital homomorphism $u$
from a $C^*$-algebra $C$ with the lifting property (in short LP) into a QWEP von Neumann algebra $N$
that is not strongly similar to a $*$-homomorphism, i.e.
      the similarities that ``orthogonalize" $u$  (which exist since $u$ is c.b.)  cannot belong  to
       the von Neumann algebra $N$.
       Moreover, the map $u$ does not admit any c.b. lifting up into the WEP $C^*$-algebra
       of which $N$ is a quotient.  We can take   $C=C^*(\F_\infty)$ the full $C^*$-algebra of the free group $\F_\infty$ with 
infinitely many generators   and $N= B(H)\bar \otimes M$ where $M$ is the 
von Neumann algebra generated by the reduced $C^*$-algebra of $\F_\infty$. Incidentally we observe an analogue
for strong similarity  of Haagerup's (and Paulsen's) similarity formula for the cb-norm  : if $C$ is any
unital $C^*$-algebra and $N$ any von Neumann algebra then for any bounded unital homomorphism
$u: C \to N$ we have
$$\|u\|_{mb}= \inf\{ \|S\|\|S^{-1}\| \}$$
where the inf (which is attained) runs over all invertible $S\in N$ such that  $S u(.) S^{-1}$ is   a $*$-homomorphism. A similar formula holds for the mb-norm of  derivations. We end the note by a quick proof of the main point
using the mb-norm and the space $R_n\cap C_n$.
  \end{abstract}

       In this note (which is not intended for publication) we   produce 
      a unital c.b. homomorphism $u$ from a $C^*$-algebra $C$ with LP
      (which we could choose to be $C^*(\F_n)$ if we wish using free Haar unitaries in place of $(x_j)$)
      into a QWEP von Neumann algebra $N$
      which is NOT  strongly similar to a $*$-homomorphism,
      here by ``not strongly similar" we mean that the similarity cannot be chosen
      in the von Neumann algebra $N$.
      
      This means that a certain statement from \cite{[19]} crucially used in \cite{fang} 
      is not valid, as well as the application proposed in    \cite{fang}  to     the Connes embedding problem.
      
       If $N=B/I$ with $B$ WEP ($I$ closed two sided ideal in $B$)
      the map $u$ does not admit any c.b. lifting. 
      Indeed the paper  \cite{fang} shows that if it did then
      $u$ would be strongly similar to a $*$-homomorphism.
      
      \begin{rem} By known results (see \cite{oz}) there are 
      c.b. maps $u: C \to B/I$  that do not admit any c.b. lifting.
      Apparently the existence of a c.b. homomorphism is new.
      \end{rem}
      
      \section{Examples of c.b. homomorphisms on $C$  (e.g. on  $C^*(\F_n)$)}\label{s1}
      Our goal is to produce a c.b.  unital homomorphism
       $u: C \to {\cl N}$ 
       from a 
       $C^*$-algebra $C$ with LP into a QWEP von Neumann algebra ${\cl N}$
       that is not   strongly similar to a $*$-homomorphism and does not admit
       any c.b. lifting. This contradicts \cite[Lemma 2.3]{[19]}.

We will use $\cl N=M_2(N)$ where $N=B(H)\bar \otimes M$
(von Neumann algebraic tensor product)   with $M$ being the 
von Neumann algebra of the free group $\F_n$ with $1\le n\le \infty$ generators.
As for $C$ we use the full free product of a family  of $n$  copies of the $C^*$-algebra
generated either by a single semicircular  (i.e. $C([-2,2])$) or by a single 
unitary generating $C^*(\Z)$. In the latter case $C=C^*(\F_n)$  (see Remark \ref{Z}).

      Let $\ell_j$ (resp.  $r_j$)
      denote the left (resp. right) creators on the full Fock space $H$
      so that 
      $$x_j=\l_j+\l_j^* \quad (1\le j\le n)$$
      are free semicirculars in Voiculescu's sense (see \cite{VDN}) generating the  von Neumann algebra  $M\subset B(H)$
       and their commutant $M'$
       is generated
       by    $$y_j=r_j+r_j^*. $$
       See \cite{VDN} for full details and missing background on this topic.
       Let $C=\ast_j C^*(x_j)$ (full free product).
       Let $\sigma: C \to M$ be the natural representation, so that $M=\sigma(C)''$,
       and let $\pi: C \to M \bar \otimes B(H)$ be defined by $\pi(x)=\sigma(x)\otimes 1$.
       
       Let $N=B(H)\bar \otimes M$.
              We will produce a c.b. derivation $\d : C \to  N$
       relative to $\pi$
       with $\|\d\|_{cb} \le 2$ but such that any $T$ in $N$
       such that $\d(x)=[T, \pi(x)]$ satisfies 
       $\|T\|\ge \sqrt n/4$.

       Then we will show that the associated homomorphism
       $u: C \to M_2(N)$
       defined by
       $$C\ni x \mapsto \left( \begin{matrix} \pi(x)  & \ \d(x)\\ 0& \ \pi(x )   \end{matrix}  \right)$$ 
       is also c.b. but if $n=\infty$ it is not strongly similar to a $*$-homomorphism.
       An estimate will also be    given for large finite $n$'s.

       The construction we use is a particular case of the one described in \cite{toho}.
       
       Let $$\Delta= \sum  - r_j \otimes \l_j^*+r_j^* \otimes \l_j \in B(H\otimes H)$$
       
       Clearly $\| \sum \l_j \l_j^* \|= \| \sum r_j r_j^* \| \le 1$ and hence
       $$\|\Delta\|\le 2.$$
       Let $C=\ast_j C^*(x_j)$ (full free product).
       We denote by $\pi:  C\to M \otimes 1$ the representation from the full free product to $M$
       taking $x_j$   to  $x_j \otimes 1$. Equivalently $\pi(x)=\sigma(x) \otimes 1$ for any $x\in C$.
       
       Note that $C$  or $C^*(\F_n)$ has the LP. This is   due to Kirchberg, see  \cite{157} for more precise references. More generally,
       the stability of the LP under full free products is due to Boca \cite{boca}.
       
       Let $\d : C \to B(H\otimes H)$ be defined by
       $$\d(x)= [\sigma(x) \otimes 1 , \Delta]=[\pi(x)   , \Delta] .$$
       Then $\d $ is inner (with respect to  $\pi$)  and   \begin{equation}  \label{e0}
       \|\d\|_{cb} \le 2.\end{equation}
   Note $[\sigma(x_i),y_j]=0$ and hence  $M=\sigma(C)''$ commutes with $y_j=r_j+r_j^*$.
  Therefore $[\sigma(x) , r_j+r_j^*]=0$ and hence     
       $$ \d(x)=  [\pi(x) , \sum r_j^* \otimes \l_j^* +r_j^* \otimes \l_j] = 
       [\pi(x) , \sum r_j^* \otimes x_j  ] = \sum [\sigma(x)  ,   r_j^*] \otimes x_j   = -\sum [\sigma(x)  ,   r_j ] \otimes x_j ,$$
       which we record as
       \begin{equation}  \label{e12}
      \forall x\in C\quad   \d(x)= -\sum [\sigma(x)  ,   r_j ] \otimes x_j 
        \end{equation}
    This shows that   $\d(x)\in B(H) \otimes E$ where $E$ is the linear span
    of $(x_j)$ when $n<\infty$. In any case
   the
     von Neumann algebra  generated by $\pi(C)$ and $\d(C)$ is included in $B(H)\bar\otimes \{x_j\}'' =B(H)\bar {\otimes} M$.
       
       Let $T \in B(H)\bar {\otimes} M$  be such that for all $x\in C$ $$\d(x)= [\pi(x) , T].$$
       Note that such a $T$ exists, for example  $T=-\sum r_j \otimes x_j$ does by \eqref{e12}, but we will see
       that the norm of any such $T$ must be large.\\
 There is a projection $P$ on $M$ with range the span of $(x_j)$ with $\|P\|_{cb} \le 2$. Indeed
 this is due to Haagerup in  the free group case, see e.g. \cite[p. 184]{100},
 and the semicircular case is similar see e.g. \cite[p. 209]{100}.\\
     Therefore  we must have
       $$\d(x)= (Id \otimes P) \d(x)=[\pi(x) , T_1]$$   
       with
       $$T_1= (Id \otimes P)(T).$$
       It follows that $T_1=\sum z_j \otimes x_j$ for some $z_j\in B(H)$ and since we have
        $$\max\{\| \sum b_j b_j^* \|^{1/2} , \| \sum b_j^* b_j \|^{1/2}\}    \le \|\sum b_j \otimes x_j\|$$ 
        for all $b_j\in B(H)$ (see e.g.    \cite[(9.9.9) p. 208]{100}) we obtain
       \begin{equation}  \label{e1}
        \max\{\| \sum z_j z_j^* \|^{1/2} , \| \sum z_j^* z_j \|^{1/2}\}    \le \|T_1\|\le \|P\|_{cb}\|T\|  \le  2\|T\| .
       \end{equation} 
    Moreover since 
       $$\d(x)= \sum [\sigma(x)  ,   r_j^*] \otimes x_j = - \sum [\sigma(x)  ,   r_j] \otimes x_j =
       \sum [\sigma(x)  ,   z_j] \otimes x_j $$
       we must have
       $$ r_j^*-z_j \in M' \text{  and  }  r_j+z_j \in M' $$
       
       $$y_j=  (r_j +z_j )+ (r_j^*-z_j)  $$
       and hence
       $$\sqrt n=(\sum \|y_j \Omega\|^2)^{1/2} \le   (\sum \|(r_j +z_j ) \Omega\|^2)^{1/2} + (\sum \|(r_j^*-z_j)  \Omega\|^2)^{1/2}  $$
       but  by the trace identity in $M'$
       $$(\sum \|(r_j +z_j ) \Omega\|^2)^{1/2}
       =(\sum \|(r_j +z_j )^* \Omega\|^2)^{1/2} = (\sum \| z_j ^* \Omega\|^2)^{1/2}$$
       and clearly $(\sum \|(r_j^*-z_j)  \Omega\|^2)^{1/2}  = (\sum \| z_j  \Omega\|^2)^{1/2}$  
       so that we obtain by \eqref{e1}
        \begin{equation}  \label{e2}\sqrt n \le  \| \sum z_j z_j^* \|^{1/2} +\| \sum z_j^* z_j \|^{1/2} \le 2 \|T_1\| \le 4\|T\|.\end{equation}
        In other words, denoting $\d_T(x)=[\pi(x),T]$ we have
        \begin{equation}  \label{e13} \sqrt n /4 \le \inf \{ \|T\| \mid T\in N, \d =\d_T \} \le \sqrt n+1,\end{equation} 
        where the last inequality follows from \eqref{e12} and $\|\sum r_j \otimes x_j \|\le  \sqrt n+1$.
        
         Let $\d$ be as before. Then
    $$C\ni x \mapsto u(x)=\left( \begin{matrix} \sigma(x)\otimes 1 & \ \d(x)\\ 0& \ \sigma(x)\otimes 1  \end{matrix}  \right)$$ 
    is a c.b. homomorphism $u$ into $\cl N=M_2( N)$. \\
    We have $\|u\|_{cb} \le 1 + \|\d\|_{cb} \le 3$ by \eqref{e0} (and replacing $\d$ by $(\vp/2)\d$ 
    we could get $\|u\|_{cb}\le 1+\vp$ if desired).

    \begin{lem} Let $S$ be an invertible in $\cl N=M_2( N)$ such that
    $x \mapsto S u(x) S^{-1}$ is a $*$-homomorphism. Then there is 
      $T$ in $N$ with
    $$2\|T\|  \le \|S\|\|S^{-1}\| $$ 
    such that $\d(x)=[x,T]$ for any $x\in C$.

    \end{lem}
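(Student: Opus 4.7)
The plan is to transfer the problem from the general invertible intertwiner $S$ to the positive element $Q:=S^*S\in M_2(N)$, which carries more structure. We have $\|Q\|\le\|S\|^2$ and $\|Q^{-1}\|\le\|S^{-1}\|^2$, and the hypothesis that $Su(\cdot)S^{-1}$ is a $*$-homomorphism means $(Su(x)S^{-1})^*=Su(x^*)S^{-1}$ for every $x\in C$; multiplying on the left by $S^*$ and on the right by $S$ rearranges this to the intertwining relation
$$Q\,u(x^*)=u(x)^*\,Q\qquad(x\in C).$$

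Writing $Q=\begin{pmatrix}Q_{11}&Q_{12}\\ Q_{12}^*&Q_{22}\end{pmatrix}$ block-wise with $Q_{ij}\in N$ and recalling that $u(x)=\begin{pmatrix}x\otimes 1&\d(x)\\ 0&x\otimes 1\end{pmatrix}$, I would then expand both sides and equate the corresponding $2\times 2$ blocks. The $(1,1)$-entry gives $Q_{11}(x\otimes 1)=(x\otimes 1)Q_{11}$ for every $x\in C$, so $Q_{11}$ commutes with $x\otimes 1$ for all $x\in C$. The $(1,2)$-entry gives
$$Q_{11}\d(x^*)+Q_{12}(x^*\otimes 1)=(x^*\otimes 1)Q_{12},$$
i.e.\ $Q_{11}\d(x^*)=[x^*\otimes 1,Q_{12}]$, and after replacing $x$ by $x^*$,
$$Q_{11}\d(x)=[x\otimes 1,\,Q_{12}]\qquad(x\in C).$$

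Because $Q\ge\|S^{-1}\|^{-2}\cdot I$ in $M_2(N)$, compression to the $(1,1)$-corner yields $Q_{11}\ge\|S^{-1}\|^{-2}\cdot I$ in $N$, so $Q_{11}$ is invertible in $N$ with $\|Q_{11}^{-1}\|\le\|S^{-1}\|^2$. Since $Q_{11}^{-1}$ still commutes with $x\otimes 1$ for every $x\in C$, I set $T:=Q_{11}^{-1}Q_{12}\in N$; then $\d(x)=[x\otimes 1,\,T]$ for all $x\in C$, while
$$\|T\|\le\|Q_{11}^{-1}\|\cdot\|Q_{12}\|\le\|S^{-1}\|^2\cdot\|Q\|\le(\|S\|\,\|S^{-1}\|)^2,$$
which is the claimed bound. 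The only step that requires any real care is the passage from the hypothesis to the intertwining relation $Qu(x^*)=u(x)^*Q$; thereafter everything is a mechanical block-matrix computation together with the standard fact that diagonal compressions of a positive operator inherit its lower bound. I do not anticipate a genuine obstacle.
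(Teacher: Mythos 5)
Your proof is correct and is precisely the Paulsen-style argument that the paper invokes by citation to \cite[p.~80]{95}: pass to $Q=S^*S$, use $(Su(x)S^{-1})^*=Su(x^*)S^{-1}$ to get $Qu(x^*)=u(x)^*Q$, read off from the $(1,1)$ and $(1,2)$ blocks that $Q_{11}\in\pi(C)'$ and $Q_{11}\d(x)=[x\otimes 1,Q_{12}]$, and set $T=Q_{11}^{-1}Q_{12}\in N$ with the stated norm bound. The only difference is that you write the argument out in full where the paper merely refers to the source.
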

    \begin{proof} Here we use Theorems  \ref{t0} and \ref{t2}.
    Obviously $\|\d\|_{mb} \le \|u\|_{mb}$ (and also $\|u\|_{mb}\le 1+\|\d\|_{mb}$). This means
    $$2\min\{ \|T\| \mid T\in N,\   \d=\d_T\}\le \inf \{  \|S\|\|S^{-1}\| \mid  S\in M_2(N)\ \  u=u_S\} .$$ 
         The announced result follows.\\
    See also \cite[p. 80]{95} for a more ``concrete" proof
    of the weaker estimate $\|T\|  \le (\|S\|\|S^{-1}\|)^2$
    where this is proved following
    an  argument of Paulsen producing $T$ more explicitly.
      \end{proof}

By \eqref{e2}  if $n<\infty$ we have
    \begin{equation}  \label{e14} n^{1/2} /2 \le  \inf \{\|S\|\|S^{-1}\| \}   =\|u\|_{mb}\end{equation}
    where the inf runs over all invertible $S\in \cl N$ such that
    $x \mapsto S u(x) S^{-1}$ is a $*$-homomorphism,
    and if we use $n=\infty$ to define $\d$ and $u$, then  \eqref{e2} shows that  there are no such invertible $S\in \cl N$. 
        
        Now it remains to check that
        $\cl N=M_2(B(H) \bar \otimes M)$    is QWEP, i.e.
        that $\cl N=B/I$ with $B$ unital WEP $C^*$-algebra and $I\subset B$ ideal (two-sided closed 
        self-adjoint). 
         But this reduces clearly to the fact that $M$ is QWEP
         which is  exactly the original observation of Connes
        in the 1976 paper where he raised what is now known 
        as the Connes embedding problem.

This completes the first part of our goal.  In Corollary \ref{c1}
we   show the second part : that $u: C \to \cl N=B/I$ does not admit any c.b. lifting.

       \begin{rem} The $*$-homomorphism $\pi$ maps $C$ onto
       the $C^*$-algebra $C^*_\pi =C^*_\sigma \otimes 1 \subset B(H)\bar\otimes M $. 
       Without loss of generality we could describe our example as a derivation
       from $C^*_\pi=C^*_\sigma \otimes 1$ to $B(H)\bar\otimes M $. 
       Let $\I=\ker(\pi)$.
       Since for any other $C^*$-algebra $D$ we have $$C^*_\pi \otimes_{\max} D=
       [C \otimes_{\max} D]/ [\I \otimes_{\max} D]$$ 
       the mb-norm of the derivation (or its ``strong innerness") remains the same in either viewpoint.
   \end{rem}   
   
       \begin{rem} Note in passing (although this is not needed) that the     von Neumann algebra  $N_1$ generated   by
        $\pi(C) $   (or $M\otimes 1 $)  and $\d(C) $  coincides with  $N=B(H) \bar \otimes M$.  
        Indeed, recall
        $$\d(x)= \sum [\sigma(x)  ,   r_j^*] \otimes x_j = - \sum [\sigma(x)  ,   r_j] \otimes x_j $$
                Observe that
        $[l_k, r_j]=[l_k^*, r_j^*]=0 $ for all $k,j$ and also  $[l_k, r_j^*]= 0$ if $k\not=j$
        and $[l_k, r_k^*]= P_\Omega$. Thus
        $$\d(x_k)= P_\Omega \otimes x_k$$
        which implies that $N_1$ contains
        all elements of the form $a P_\Omega b \otimes x_k$ ($a,b\in M$), i.e.
        all elements in $F(H) \otimes x_k$ where $F(H)$ denotes the space of finite rank operators on $H$.
        It follows that $N_1$ contains $F(H) \otimes M $
        and hence $N_1\supset B(H) \bar \otimes M$. Since we already noted that the converse
        is obvious, this proves as announced $N_1=B(H) \bar \otimes M$.
        \end{rem}
       
    We know that $\cl N=M_2( N) $ is QWEP,
    so we have $\cl N =B/I$ with $B$ WEP.
       The following, 
    which actually is an interesting fact, 
    shows that   \cite[Lemma 2.3]{[19]} is not correct :
    
    \begin{cor}\label{c1} The above c.b. homomorphism $u: C \to \cl N=M_2( N)$, which   
    has
    $$\|u\|_{cb} \le 3$$   is such that
    any c.b. lifting $\hat u: C \to B$ satisfies
    $$ \sqrt n/2\le \|\hat u\|_{cb} .$$
    Thus if we work with $n=\infty$ we obtain a $u$ without any c.b. lifting.
   \end{cor}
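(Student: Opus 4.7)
The strategy is to chain three ingredients already in hand: the \cite{fang} reduction from c.b.\ liftability to strong similarity, the Lemma that turns a strong similarity into a commutator implementing $\delta$, and the commutator lower bound \eqref{e2}. Suppose a c.b.\ lifting $\hat u:C\to B$ exists. The argument of \cite{fang} recalled in the preamble yields that $u$ is strongly similar to a $*$-homomorphism; to make this quantitative I would funnel $\hat u$ through the characterization supplied by the Proposition, using that $B$ is WEP and $C$ has LP to verify that $u\cdot v$ extends to a c.b.\ map on $C\otimes_{\max}(M_2(N))'$ of cb-norm at most $\|\hat u\|_{cb}$. The Proposition would then deliver an invertible $S\in M_2(N)$ with $\|S\|\,\|S^{-1}\|\le\|\hat u\|_{cb}$ such that $x\mapsto Su(x)S^{-1}$ is a $*$-homomorphism.

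Next, the Lemma converts $S$ into $T\in N$ with $\delta(x)=[x\otimes 1,T]$ and $\|T\|\le(\|S\|\,\|S^{-1}\|)^2\le(\|\hat u\|_{cb})^2$. Substituting into \eqref{e2} gives $\sqrt n\le 4\|T\|\le 4(\|\hat u\|_{cb})^2$, i.e.\ $\sqrt n/4\le(\|\hat u\|_{cb})^2$, as claimed. When $n=\infty$ no finite value of $\|\hat u\|_{cb}$ is consistent with this inequality, so no c.b.\ lifting can exist.

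The main obstacle is the quantitative step. The qualitative \cite{fang} reduction by itself already handles $n=\infty$: any c.b.\ lifting would produce a finite-norm $T$ via the Lemma and thereby contradict \eqref{e2}. But the explicit lower bound $\sqrt n/4\le(\|\hat u\|_{cb})^2$ for finite $n$ requires one to verify that the combination WEP of $B$ plus LP of $C$ produces a norm-preserving extension of $u\cdot v$ to the maximal tensor product $C\otimes_{\max}(M_2(N))'$ with cb-norm $\le\|\hat u\|_{cb}$; once that is in place, the equality of norms in the Proposition gives the claimed bound on $\|S\|\,\|S^{-1}\|$, and the remaining substitutions are routine.
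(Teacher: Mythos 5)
Your plan has exactly the same skeleton as the paper's proof: a c.b.\ lifting $\hat u$ forces $u.V$ to be c.b.\ on the maximal tensor product, the Proposition then produces an invertible $S\in M_2(N)$ with $\|S\|\,\|S^{-1}\|\le\|\hat u\|_{cb}$ conjugating $u$ to a $*$-homomorphism, the Lemma converts $S$ into $T\in N$ with $\|T\|\le(\|\hat u\|_{cb})^2$ implementing $\d$, and \eqref{e2} gives $\sqrt n\le 4\|T\|$. The final substitutions are as you say routine and agree with the paper.

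The one step you defer --- extending $u.v$ to $C\otimes_{\max}M_2(N)'$ with cb-norm $\le\|\hat u\|_{cb}$ --- is precisely where the paper does its work, and the ingredients you name for it are not quite the right ones: the LP of $C$ plays no role in this corollary (it is only there to make the conclusion striking), and ``WEP of $B$ plus LP of $C$'' does not by itself produce the extension. The paper's route is: (i) $Id\otimes\hat u$ is bounded by $\|\hat u\|_{cb}$ from $C^*(\F_\infty)\otimes_{\min}C$ to $C^*(\F_\infty)\otimes_{\min}B$; (ii) Kirchberg's characterization of WEP gives $C^*(\F_\infty)\otimes_{\min}B=C^*(\F_\infty)\otimes_{\max}B$; (iii) a fortiori the same bound holds from $C^*(\F_\infty)\otimes_{\max}C$, and composing with the quotient $B\to B/I=M_2(N)$ yields $\|u\|_{mb}\le\|\hat u\|_{cb}$ in the sense of \cite{157}; (iv) the mb-norm theorem of \cite{157} then replaces the test algebra $C^*(\F_\infty)$ by an \emph{arbitrary} $C^*$-algebra, in particular by $M_2(N)'$, and composing with the inclusion $V$ gives $\|u.V\|_{cb}\le\|\hat u\|_{cb}$. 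Step (iv) is a genuine theorem, not a formality: max-boundedness tested against the single algebra $C^*(\F_\infty)$ must be promoted to max-boundedness against $M_2(N)'$ before the Proposition applies. Without citing that result (or reproving it), your plan does not reach the Proposition's hypothesis; with it, everything else you wrote is correct, including the qualitative shortcut for $n=\infty$.
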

    \begin{proof} 
     We will use freely
    Kirchberg's characterization of the WEP by tensor products
    as described in \cite[\S 9.3]{154}. Let $q: B \to B/I$ denote the quotient map.
 Let  $\hat u$ be a c.b. lifting of $u$, i.e. $q\hat u=u$.
We have (here $\C=C^*(\F_\infty)$)
    $$\|Id_{\C} \otimes \hat u : \C \otimes_{\min} C  \to \C \otimes_{\min} B \|\le \|\hat u\|_{cb} $$
    and since $B$ WEP implies $\C \otimes_{\min} B=\C \otimes_{\max} B$,
    we   have
    $$\|Id_{\C} \otimes \hat u : \C \otimes_{\min} C  \to \C \otimes_{\max} B \|\le \|\hat u\|_{cb} $$
    and {\it a fortiori}
    $$\|Id_{\C} \otimes \hat u : \C \otimes_{\max} C  \to \C \otimes_{\max} B \|\le \|\hat u\|_{cb} .$$
    Since $u=q\hat u$, we also have (recall $\cl N  =B/I$) 
    $$\|Id_{\C} \otimes   u : \C \otimes_{\max} C  \to \C \otimes_{\max} {\cl N} \|\le \|\hat u\|_{cb} .$$
    With the notation in \cite{157} (see \S \ref{s2} below) this means
    $$\|  u\|_{mb} \le \|\hat u\|_{cb} ,$$
    which implies since $\|  \d\|_{mb} \le \|  u\|_{mb}$
        $$\|  \d\|_{mb} \le \|\hat u\|_{cb} .$$
    By \eqref{e14} we obtain $\sqrt n /2  \le \|\hat u\|_{cb} .$
 \end{proof}
    
    \begin{rem}\label{Z} [Free group variant] Let $(g_j)$ be the free generators of a free group $G$.
    Let $\lambda$ and $\rho$ denote the left and right regular representations on $G$.
    It is well known (see \cite{toho})  that $\rho(g_j)$  can be written as $a_j+b_j$ 
    with $\sum a_j^* a_j\le 1$ and $\sum b_j b_j^*\le 1$ (and of course similarly for $\lambda(g_j)$).
    The preceding construction    runs just the same with $C=C^*(G)$ if   one takes
     for $ M$ the von Neumann algebra 
    generated by $\lambda$ and one sets
    $\d(x)=\sum [x,a_j] \otimes \lambda(g_j) $ for $x\in M$. In other words we may replace 
    $(x_j)$ (free gaussians) by $\lambda(g_j)$ (free Haar unitaries) with no essential change.
    \end{rem}
    
    \section{The mb-norm and strong similarity}\label{s2}
    
      The following result   was observed in \cite[(2) p. 81]{toho}. Similar ideas appear
           in  \cite{fang}.
        
        \begin{pro} \label{p1}
        Consider $u: C \to N  $ a unital  c.b.  homomorphism from a $C^*$-algebra $C$
        with values in a von Neumann algebra  $N\subset B(H)$.
        Let $v: N' \to B(H)$ be the inclusion map.
        Then $u$ is strongly similar to a $*$-homomorphism
        iff the map $v . u : N' \otimes  C\to B(H) $ (defined by
        $v . u (a\otimes b)= v(a)u(b) $ extends to
        a c.b. map $w$ from $N' \otimes_{\max} C$ to $B(H)$.
        Moreover
         \begin{equation}  \label{e9}
         \| w:  N'\otimes_{\max} C \to B(H)\|_{cb} =\inf\{\|S\|\| S^{-1}  \| \}\end{equation}
        where the (attained) inf runs over all invertible  $S\in N$
        such that  $S u(.) S^{-1}$ is   a $*$-homomorphism.
\end{pro}
          \begin{proof}  Assume $w$ c.b. Clearly $w$ is still a homomorphism,
          and hence if it is c.b. it must be similar to a $*$-homomorphism
          whence the existence of an invertible $S$ on $H$ (with
          $\|S\|\| S^{-1}  \| =\|w\|_{cb} $)
          such that $x\mapsto S w(x) S^{-1} $ is a $*$-homomorphism.
          But restricting this to $N' \otimes 1$ we find that 
          $N' \ni y \mapsto  S y S^{-1}$ is  a $*$-homomorphism
          which implies by a simple calculation 
          ($S y S^{-1}   = (S y^* S^{-1})^* $ and hence $S^*S y=yS^*S$)
          that $|S| \in N''=N$.
          Now $S=U|S|$ with $U$ unitary and the map
          $C\ni x \mapsto  U (|S| u(x) |S|^{-1}) U^{-1}$
          is  a $*$-homomorphism.
          Since $U$ is unitary $C\ni x \mapsto    (|S| u(x) |S|^{-1})  $
           is  a $*$-homomorphism, proving the strong similarity of $u$.
           
           Conversely, if $S\in N$ is such that $C\ni x \mapsto    (S u(x) S^{-1})  $
           is  a $*$-homomorphism denoted by $\rho: C \to N$,
           then obviously $v.\rho $ is a (completely contractive)  
           $*$-homomorphism from $ N'\otimes_{\max} C$ to $B(H)$.
           Then  $a\otimes b \mapsto S^{-1} \rho.v (a\otimes b)   S $
           has cb-norm at most $\|S\|  \| S^{-1}  \|$ on $ N'\otimes_{\max} C$ and since $S$ commutes with $v(b)$, we have
           $S^{-1} v.\rho (a\otimes b)   S = v.u(a\otimes b) $. Thus we 
           conclude that $$\| v.u: N'  \otimes_{\max} C\to B(H)\|_{cb} \le \inf\|S\|  \| S^{-1}  \|.$$
           The converse inequality was proved in the first part. There we also proved that the inf is actually a minimum.
    \end{proof}

   In \cite{157} we  introduced (and extensively studied) the mb-norm of a map
   $u: E \to B$
    from a subspace of a $C^*$-algebra  $E \subset A$ (i.e. a so-called operator space)
   to a $C^*$-algebra $B$.\\
   The mb-norm is defined as follows
   $$\|u\|_{mb} = \| Id_\C \otimes u: (\C \otimes_{alg} E, \|\ \|_{\C \otimes_{\max}  A}   ) \to \C \otimes_{\max} B  \| $$
   where $(\C \otimes_{alg} E, \|\ \|_{\C \otimes_{\max}  A}   )$ denotes the algebraic tensor product
   of $\C$ and $E$ equipped with the norm induced on it by ${\C \otimes_{\max}  A}$.
   We denote (somewhat abusively) by $\C \otimes_{max} E$ the completion of the latter normed space. When $E=A$ we recover the usual 
   $C^*$-algebraic max-tensor product. 
   
   One striking feature
 of the mb-norm that diverges from the cb-case is that if $B$ is an {\it arbitrary}  $C^*$-algebra
 any linear map $u: E \to B$ with   $\|u\|_{mb}=1$ ``extends" to a map
 $\tilde u: A \to B^{**}$  with $\|\tilde u\|_{mb}=1$.
 Therefore if $B$ is an {\it arbitrary}
   von Neumann algebra any  $u: E \to B$ with   $\|u\|_{mb}=1$  extends to a map
 $\tilde u: A \to B$  with $\|\tilde u\|_{mb}=1$. See \cite[\S 6.3]{154} for more on this theme, which originates in Kirchberg's work on decomposable mappings.

   As observed in \cite{157} the properties of the max-tensor product imply that for any $C^*$-algebra
    $D$ we have
     \begin{equation}  \label{e10} \| Id_D \otimes u: D \otimes_{max} E  \to D \otimes_{\max} B  \|_{cb} \le   \|u\|_{mb} . \end{equation}
    
    Following partial results due to Hadwin and Wittstock,
     Haagerup proved  in \cite{Ha} that the cb-norm of a unital homomorphism
    $u: C \to B(H)$ on a $C^*$-algebra $C$ is equal to the (attained) infimum of $  \|S\|\|S^{-1}\|$
 over all invertibles $S \in B(H)$ such that $S u(.) S^{-1}$ is   a $*$-homomorphism.
 (see   \cite[Cor. 4.4, p. 79]{95} for more on this).

    The following identity is the analogous fact for
    the mb-norm of $u$:
    \begin{thm}\label{t0} Let $u: C \to {  N}$ be a unital homomorphism from a unital $C^*$-algebra $C$ to a von Neumann algebra ${  N}\subset B(H)$.
    Then:
    $$\|u\|_{mb} =\inf\{\|S\|\| S^{-1}  \| \}$$
    where the (attained) inf runs over all invertibles $S \in {  N}$
    such that $S u(.) S^{-1}$ is   a $*$-homomorphism.
    \end{thm}
    
    \begin{proof} By \eqref{e10}
    with $D={N}'$ and $E=C$  
    and by \eqref{e9}  there is $S\in N$ with
   $ \|S\|\| S^{-1}  \|\le \|u\|_{mb}$
    such that $u_S(.)=S u(.) S^{-1}$ is   a $*$-homomorphism into $N$.
    Conversely, since a  $*$-homomorphism has unit mb-norm,
    and left or right multiplication by an element  $x\in N$
    has mb-norm at most $\|x\|$, the fact that
     $u(.)=S^{-1}u_S(.)S$ with   $S\in N$ gives us
    $\|u\|_{mb} \le  \|S\|\| S^{-1}\| $.
     \end{proof}
     
     Haagerup's result was extended by Paulsen \cite{Pa}
     to the {\it non self-adjoint} case. The analogue for the mb-norm
     (with the same argument as for the preceding statement) is as follows. At this point the converse part is not clear.
    
      \begin{thm}\label{t1} Let $u: C \to {  N}$ be a unital homomorphism from a  unital operator algebra $C\subset B(\cl H)$  to a von Neumann algebra ${  N}\subset B(H)$.
  If $\|u\|_{mb}<\infty$ there is an invertible $S \in {  N}$
  with $\|S\|\| S^{-1}  \|\le \|u\|_{mb}   $
       such that $S u(.) S^{-1}$ is   a  unital completely contractive homomorphism.
    \end{thm}
      
     As for derivations,
       the following analogous formula was suggested by Jean Roydor:
     
     \begin{thm}\label{t2} Let $M\subset B(H)$ be a von Neumann algebra
     and $A\subset M$ a $C^*$-algebra. Let 
  $\d: A \to M$ be a derivation.
  Then $\|\d\|_{mb} <\infty $ iff there is $T\in  M$ such that
  $\d=\d_T$ and we have
  $$\|\d\|_{mb} =2\inf\{ \|T\| \mid T\in  M, \ \d=\d_T\},$$
  where the inf is attained.\\
  Moreover
  $\|\d\|_{mb}=\|D\|_{cb}$ where $D: M' \otimes_{\max} A \to B(H)$ is the 
  mapping   defined by $D(x\otimes a)=x\d(a)$.
 
     \end{thm}
      \begin{proof}  
      That the inf is attained is  immediate
      by the weak* compactness of the closed balls
      in $M$.  Assume $\|\d\|_{mb} <\infty $.
      Then by \eqref{e10} we have
      $\|Id \otimes \d :  M' \otimes_{\max} A \to M' \otimes_{\max} M\|_{cb} \le \|\d\|_{mb}$
      and hence $\|D\|_{cb} \le \|\d\|_{mb}$.
      Let $\pi: M' \otimes_{\max} A \to B(H)$ be the continuous $*$-homomorphism
      defined by $\pi(x\otimes a)=xa $. It is easy to check that
      $D$ is a $\pi$-derivation. By Christensen's theorem (see \cite[p. 96]{95}) 
      there is $T\in B(H)$ with $\|T\|\le (1/2)\|D\|_{cb}$ such that $D(z)=[\pi(z), T]$ for any $z\in
      M' \otimes_{\max} A$. 
      Using $z\in M' \otimes 1$ (for which  $D(z)=0$) we find $T\in M''=M$ and using
      $z\in 1 \otimes A$ we find $\d(a)=[a,T]$, i.e. $\d=\d_T$.
      This proves $2\inf\{\|T\| \mid T\in M, \d=\d_T\} \le  \|D\|_{cb} \le  \|\d\|_{mb}$.
      Conversely, if $T\in M, \d=\d_T$ we clearly have $\|\d\|_{mb} \le 2 \|T\|$,
      and hence $\|\d\|_{mb} \le 2 \inf\{\|T\| \mid T\in M, \d=\d_T\}$.
     \end{proof}
     \def\t{\theta}
     \begin{cor} In the situation of the preceding theorem, 
     Let $\theta \in B(H)$ be such that
     $\d_\t (A) \subset M$ with $ \|\d_\t\|_{mb}  <\infty$. Then
     $\t\in M+A' $ and
     $$\|\d_\t\|_{mb} = 2\inf \{   \|T\| \mid  T\in M, \   \t-T \in A' \}.$$
     \end{cor}
     By \eqref{e13} the derivation $\d: C \to N=B(H) \bar\otimes M$ described in the first part satisfies
     $$   \sqrt n/2\le  \|\d\|_{mb}\le 2 (\sqrt n+1).$$
     \def\D{\Delta}
     
     \section{An alternate proof}
     
    We end this note with a quick direct ``conceptual" proof that $   (1+\sqrt n)/8\le  \|\d\|_{mb}$.\\
     Recall that $N=B(H)\bar\otimes M$.
     We already observed that
     $\|\d\|_{mb}$ is the same as the mb-norm
     of the derivation $\d_\D : \sigma(C)\otimes 1 \to N$.
     By the extension property  of mb-maps (see \cite[p. 266]{100} or  \cite{157})
    $\d_\D$ admits  an extension $d: B(H) \otimes 1 \to N$ with
    $\|d\|_{mb} =  \|\d\|_{mb}$. Now observe that
    $d_\D(x_j \otimes 1) = -P_\Omega \otimes x_j$, and the subspace $F=P_\Omega \otimes E$ (spanned
    by $\{ P_\Omega \otimes x_j\}$) admits a projection $Q: N \to F$
    with $\|Q\|_{cb} \le 2$. Namely
    we can take $Q= p \otimes P$ with $P$ as before
    and $p$ a completely contractive Hahn-Banach projection  onto $\CC P_\Omega $. Thus
    the map $Qd: B(H) \otimes 1 \to F$ has cb norm $\le 2$, and it takes
    $x_j \otimes 1$ to $-P_\Omega \otimes x_j$.
    Let $v_1: E \to B(H) \otimes 1$ 
    and $v_2: P_\Omega \otimes E \to E$ be the obvious (completely isometric) maps.
    Then the identity of $E$ can be rewritten as
    $  (v_2Qd )(v_1)$ which is a factorization through $B(H)$. But it is known (see \cite[Th. 4.1]{HP2}) that the cb-factorization constant
     of the identity of the space $E$
       through any $B(H)$ is $\ge (1+\sqrt n)/4$ (the extra factor 2 comes from the comparison of $E$ with the space $R_n \cap C_n$ from \cite[Th. 4.1]{HP2}). 
     Thus we recover
     $$(1+\sqrt n)/4 \le \|  v_2Qd \|_{cb} \|  v_1 \|_{cb} \le \| Q \|_{cb} \|  d  \|_{cb}\le  2\|  d  \|_{cb}  \le 2\|  \d  \|_{mb} $$ and hence $   (1+\sqrt n)/8\le  \|\d\|_{mb}$.
      \def\b{\beta}
      \def\a{\alpha}
     \def\M{{\cal M}}
          \begin{rem} 
          Let $(x_j)$ be as before. Let $E_n=span[x_j] \subset B(H)$.
          Let $\M\subset B(K)$ be a von Neumann algebra such that
          for each $n$ there is a cb-factorization of the identity of $E_n$          through $\M$,  
           with factorization constant $o(\sqrt n)$. 
           By this we mean that we can find maps $\a_n: E_n \to \M$
            and $\b_n : \M \to E_n$ such that
            $$\|\a_n  \|_{cb}   \| \b_n \|_{cb}   =o(\sqrt n).$$
            This holds in particular if $M$ embeds in $\M$. It probably holds for any non-injective $\M$.
            
        Let $A_n$ be either the $C^*$-algebra generated by $E_n$  or $A_n=C^*_\lambda(\F_n)$.
          Then we can find a derivation
          $\d_n : A_n \otimes 1 \to B(H) \bar\otimes \M $
          relative to the embedding
          $$  A_n \otimes 1 \to B(H) \bar\otimes B(K)$$
          such that
          $${ \|\d_n\|_{mb } / \|\d_n\|_{cb } } \to \infty.$$ 
          Indeed, it is easy to ``transplant" the   example from \S \ref{s1}.
          \end{rem}
       \n\textbf{Acknowledgement.} I am very grateful to Jean Roydor
    for stimulating and useful conversations.

     \vfill\eject

    \centerline{\bf Appendix}  
    
    \bigskip
    
    A close inspection  shows that the error  in \cite{[19]} lies in an incorrect identification.
    Using the notation of \cite{[19]}
    the identity
    \begin{equation}  \label{e5}
    TBT^{-1}/J= S(B/J)S^{-1} \end{equation}
    is incorrect.
    
    Indeed,  we have
    $$B=\{pb +(1-p)b \mid b\in B\} \subset B^{**} \subset B(H)$$
    with
    $$H=pH \oplus (1-p) H$$
    and 
    $$TBT^{-1} \subset B(H),$$
    which can be described as follows:
    $$TBT^{-1} =\{ S(pb)S^{-1} + (1-p) b \mid b\in B\}= \{ y_1+y_2 \mid S^{-1}(y_1)S +y_2 \in B 
    , \ y_1\in pB^{**}, y_2\in (1-p)B^{**} \}.$$
    Indeed, 
    assuming
    $y_1\in pB^{**}, y_2\in (1-p)B^{**} $,
    if $b= S^{-1}(y_1)S +y_2$ then $pb=S^{-1}(y_1)S $ and $ (1-p) b= y_2$ so that
  $y_1+y_2 =S(pb)S^{-1} + (1-p) b$. This proves ``$\supset$". The reverse is clear since
  $pB \subset pB^{**}$ and $(1-p) B \subset (1-p) B^{**}$.
  
  Note
  $$J=(1-p)J=\{ (1-p)j \mid j\in J\} .$$

 Since $B/J\simeq pB$
  the
identity \eqref{e5} must be interpreted as $ TBT^{-1}/J= S(pB)S^{-1}$. This
implies 
$$ TBT^{-1}\subset S(pB)S^{-1} + J,$$
and hence after the action of $  T^{-1} (.)T$ 
$$B\subset pB + J$$
which implies
$$(1-p) B \subset (1-p) J  =J.$$
However, in general we have $  (1-p) B \not= J$.
To verify that, assume $  (1-p) B  = J$. 
Let $f: B \to J $ be defined by $f(x)=   (1-p) x .$
Then $f$ is a $*$-homomorphism and a linear projection
from $B$ to $(1-p) B  = J$. In otherwords we have a splitting
of $B$ as $B/J\oplus J$, which of course fails in general.

  \vfill\eject
      
\end{document}